\title{\textbf{$\infty$-links $L$, $\infty$-4-manifolds $M_L$ and Kirby categories}}
\author{Renaud Gauthier \footnote{rg.mathematics@gmail.com} \\ \\}
\theoremstyle{definition}
\newtheorem*{acknowledgments}{Acknowledgments}
\newtheorem{Phai}{Definition}[section]
\newtheorem{KirbyEqu}{Definition}[subsection]
\newtheorem{Mod}[KirbyEqu]{Theorem}
\newtheorem{iop}{Definition}[subsection]
\newtheorem{NGamma}[iop]{Example}
\newtheorem{Catinf}[iop]{Example}
\newtheorem{mapofops}[iop]{Definition}
\newtheorem{catfib}[iop]{Definition}
\newtheorem{fibofop}[iop]{Definition}
\newtheorem{coloredop}[iop]{Definition}
\newtheorem{tXEk}{Definition}[subsection]
\newtheorem{XEk}[tXEk]{Proposition}
\newtheorem{CWCx}{Example}[section]
\newtheorem{Int}[CWCx]{Example}
\newtheorem{rgsp}[CWCx]{Example}
\newtheorem{var}[CWCx]{Example}
\newcommand{\beq}{\begin{equation}}
\newcommand{\eeq}{\end{equation}}
\newcommand{\rarr}{\rightarrow}
\newcommand{\StSo}{\Big( S^2 \times S^1 \Big)^{\#}}
\newcommand{\StDt}{\Big( S^2 \times D^2 \Big)^{\natural}}
\newcommand{\CR}{\mathcal{R}}
\newcommand{\CRi}{\mathcal{R}_{\infty}}
\newcommand{\CK}{\mathcal{K}}
\newcommand{\CL}{\mathcal{L}}
\newcommand{\bCL}{\mathbf{\mathcal{L}}}
\newcommand{\CC}{\mathcal{C}}
\newcommand{\CM}{\mathcal{M}}
\newcommand{\fC}{\mathfrak{C}}
\newcommand{\AXECi}{\text{Alg}_{X \mathbb{E}_2}(\text{Cat}_{\infty})}
\begin{document}
\maketitle
\begin{abstract}
We construct what we call a Kirby category, a monoidal category whose morphisms are smooth 4-manifolds, projecting down to another monoidal category whose morphisms are orientable 3-manifolds, the projection being induced by the boundary map on manifolds. We construct a higher categorical generalization of such concepts and introduce the notion of ribbon $\infty$-categories, a generalization of braided monoidal $\infty$-categories (\cite{Lu1}), which gives rise to the concepts of $\infty$-links, $\infty$-4-manifolds as well as the more general notion of walled $\infty$-4-manifolds if one focuses attention on $\infty$-4-manifolds built from gluing thickened sheets on ribbons. These fall into a larger class of constrained $\infty$-4-manifolds whose classical 4-dimensional counterparts are constrained 4-manifolds on which we consider physical theories. We regard pairs of constrained 4-manifolds and Lagrangians densities depicting physical theories defined on such spaces as morphism objects in an enhanced Kirby category, whose objects are regarded as events. We define a universal category $\Lambda$ of all events that we relate to the $\infty$-category of ribbon $\infty$-categories and conclude in part that Lagrangian field theories can be superseded by using $\infty$-categories.
\end{abstract}

\section{Introduction}
This paper grew out of a desire to make sense of such a notion as an $\infty$-link, a higher categorical generalization of the concept of link. Links can be obtained by closing geometric braids, themselves obtained from braids, morphisms in braided monoidal categories (\cite{Y}). Their $\infty$-categorical counterparts are braided monoidal $\infty$-categories (\cite{Lu1}). Developing a theory of $\infty$-links is done having applications to low dimensional topology in mind. Indeed one can glue 2-handles $D^2 \times D^2$ along framed links $L$ in the boundary $S^3 = \partial D^4$ of the 4-ball to yield smooth 4-manifolds $M_L$ (\cite{GS}). The framing in the $\infty$-setting would correspond to having a notion of ribbon $\infty$-category, something we develop in this paper. When we close geometric braids to obtain links we technically identify domain and codomain of their corresponding braid morphisms but in general one may not be able to do such a thing if domain and codomain differ. We generalize the process whereby one obtains 4-manifold by gluing handles $D^2 \times D^2$ along framed links to gluing thickened sheets $D^2 \times \widetilde{D^2}$ ($\widetilde{D^2}$ the blow up of the disk at the origin) along ribbons, giving rise to something we call constrained manifolds. Those 4-manifolds we regard as target spaces of physical theories and think of them as morphism objects between events, objects that have a dynamic flavor to them, and that we define to be objects of a representation theoretic nature. We are led to defining what we call the Kirby category whose objects are thickened events (to allow for the presence of framed links) and whose morphisms are smooth 4-manifolds. The boundary map on manifolds is shown to give rise to a moduli category $\Phi$ whose objects are events and morphisms are oriented 3-manifolds. The Kirby category $\CK$ gives rise under constraints to something called a constrained category $\CK^{\fC}$ whose objects are constrained 4-manifolds, 4-manifolds obtained from smooth 4-manifolds by adding in constraints. Constrained 4-manifolds form a family that contains objects such as 4-manifolds obtained from gluing thickened sheets along ribbons in $S^3$, the constraints here being the blow-up of the second component of $D^2 \times D^2$ to obtain a sheet $D^2 \times \widetilde{D^2}$ as well as the introduction of walls. We enhance $\CK^{\fC}$ by considering physical theories on such spaces, hence the introduction of $\CK^{\CL, \fC}$, the exact same category with the addition that morphisms are now pairs $(\CL, M)$ where $M$ is a constrained 4-manifold and $\CL$ is a Lagrangian density corresponding to a physical theory having $M$ for target space. Regarding the events themselves we subsume the existence of a putative universal category $\Lambda$ of Events whose realization in 3-dimensional space are the events we have been discussing. On the $\infty$-categorical side of the problem, we have the $\infty$-category $\AXECi$ of maps of $\infty$-operads from the twisted little cubes operad $X \mathbb{E}_2$, a generalization of the little k-disks operad $\mathbb{E}_2$ we define, to the $\infty$-category Cat$_{\infty}$ (\cite{Lu2}). Each such map defines a ribbon $\infty$-category whose morphisms $f:A \rarr B$ have an associated $\infty$-groupoid Map$(A,B)_f$ that we regard as an $\infty$-ribbon. If there is an equivalence that identifies domain and codomain of $f$, under such an identification one obtains a framed $\infty$-link Map$(A,B)_{\overline{f}}$. Coupling this with the 4-manifold $M_{\overline{f}}$ one obtains what we call an $\infty$-4-manifold $(M_{\overline{f}}, \text{Map}(A,B)_{\overline{f}})$. Morphisms $f$ for which there is no identification of their domain and codomain give rise to 4-manifolds constructed from gluing $D^2 \times \widetilde{D^2}$ on the ribbon $f$ seen as stretching between two walls, giving rise to what we call walled 4-manifolds $M_f$, and the pairs $(M_f, \text{Map}(A,B)_f)$ are referred to as walled $\infty$-4-manifolds. $M_f$ is manifestly constrained. For a given morphism $f$, we ask the question whether there is a relation between Map$(A,B)_f$ and Lagrangian densities $\CL$ corresponding to physical theories defined on $M_f$. This leads us to comparing $\Lambda$ and $\AXECi$. This paper presents the machinery necessary for making such a comparison.\\

In section \ref{section1} we motivate the necessity of considering events and their original counterparts, Events, objects in an abstract category $\Lambda$ all of whose objects are Events. We indicate that events occur in 3-dimensional space and that gives rise to the monoidal category $\Phi$ with events as objects and whose morphisms are oriented 3-manifolds. In section \ref{section2} we define the Kirby category $\CK$ a monoidal category whose objects are thickened events and whose morphisms are oriented smoothable 4-manifolds. Thickening events requires to reconsider the handle slides, something we tackle first. We then show that $\Phi$ appears as a moduli category with respect to $\CK$ via the use of a categorical generalization of the boundary functor $\partial$ on manifolds. In section \ref{section3} we present the importance of having an $S^1$-action on some structure intrinsic to the 4-manifolds that we will regard as target spaces of physical theories, such an action being seen as illustrative of an inherent dynamical flavor that allows one to define dynamical objects lying on such spaces. Such an action however is not present in general at the moment and that leads us to lower our goals to defining constrained 4-manifolds, obtained from smooth 4-manifolds by adding in some constraints, spaces that may not display an intrinsic $S^1$-action but which nevertheless happen to be target spaces of some physical theories. In section \ref{section4} we define the ribbon $\infty$-category $\CRi$ of thickened events by first going fairly fast over $\infty$-operads and by defining the twisted little k-disks operad $X \mathbb{E}_2$. The $\infty$-category $\AXECi$ has for objects ribbon $\infty$-categories, one of which, $\CRi$, has for objects thickened events and will give rise to $\infty$-links and $\infty$-4-manifolds as well as generalizations of such concepts, something we call walled $\infty$-braids and walled $\infty$-4-manifolds. In the last section we put things in perspective and try to address the connection between $\Lambda$ and $\AXECi$.

\begin{acknowledgments}
The author would like to express his gratitude to D. Gaitsgory for answering a key question about D-modules and derived algebraic stacks.
\end{acknowledgments}

\section{3-manifolds as Hom objects} \label{section1}
\subsection{Need for an ambient space - Motivation}
We call \textbf{data} representation theoretic objects such as, but not limited to, fiber functors from Tannakian $\infty$-categories to $\infty$-categories of rigid modules for instance (\cite{W}). We will limit ourselves to saying that data are objects of a representation theoretic nature. We call \textbf{Events} a minimal collection of data representing the same object which is complete in the sense that addition of any other datum representing that same object to such a collection makes such an action redundant, and omission of one piece of data makes the collection possibly non-unique in that one could separately add two different pieces of data to such a collection resulting in two distinct collections of data, those additional two data representing the same object as that being represented by the collection. We call \textbf{events} the realization of Events in 3-dimensional space. This can be implemented by seeing Events as objects of a category $\Lambda$, on which we put a Grothendieck topology, and by considering a sheaf of events over this site which we define to be a category $\Phi$ whose objects are events. $\Phi$ is defined in \ref{fai} and $\Lambda$ will be defined below. We make the fundamental assumption that there is an essentially surjective functor from $\Lambda$ to any category having objects constructed from events.\\

We are ultimately interested in having distinct events and using functors lends itself well to this setting. The (possibly) continuous process whereby one goes from one event to the next is encoded in the Hom objects in $\Phi$, those are sequences of orientable 3-manifolds. This makes it somewhat manifest that time for us will be taken as being discrete. The indexing of Events will be sufficient to determine at which point we are situated in time. For instance given a sequence $\{E_i\}_{i \geq 1}$, for $E_n$ a fixed Event, Events $E_p$ for $p < n$ are past Events, Events $E_q$ for $q > n$ are yet to come.\\

\subsection{$\Lambda$, the category of Events}

By definition Events are fully known. We claimed earlier that one can assemble $\Lambda$ into a category. Concretely this means taking Events as objects, morphisms from one event $E$ to another event $E'$ are any map $E \rarr E'$, the identity is defined object-wise: $id|_E = id_E$, composition is defined by just being a succession of morphisms and for $f:E \rarr E'$ a morphism from one event $E$ to another event $E'$, it is immediate that $id \circ f = id|_{E'} \circ f = id_{E'} \circ f = f$ as well as $f \circ id = f \circ id|_E = f \circ id_E = f$. Composition of morphisms being defined as a succession of maps immediately yields a notion of grading on morphisms which we define for a given morphism to be the number of maps involved in defining that particular morphism. In particular if $f:E \rarr E'$ does not result from composing other maps then it is defined to have degree one. The following morphism:
\beq
E_1 \xrightarrow{f_1} E_2 \rarr \cdots E_{n-1} \xrightarrow{f_{n-1}} E_n
\eeq
being a succession of $n-1$ maps, none of which is itself a succession of other maps, is defined to have degree $n-1$. Let $\Lambda^{\geq n}(E):=\{f:E_1 \rarr \cdots \rarr E_n \rarr E \}$ be the family of morphisms in $\Lambda$ that result from composing at least $n$ morphisms. This defines a sieve over $E$. We take $\{\Lambda^{\geq n}(E) \}_{n \geq 0}$ to be a family of covering sieves. Note that it includes $\Lambda^{\geq 0}(E)$ the set of all morphisms to $E$ in $\Lambda$. Let $\Lambda^{\geq n}(E)$ be a covering sieve, $\psi:E' \rarr E$ a morphism, the restriction $\Lambda_{E'}^{\geq n}(E)$ of $\Lambda^{\geq n}(E)$ to $E'$ is the family of morphism $E'' \rarr E'$ such that $E'' \rarr E' \rarr E$ is in $\Lambda^{\geq n}(E)$, but every such composition is in $\Lambda^{\geq n}(E)$, so $\Lambda_{E'}^{\geq n}(E) = \Lambda^{\geq 1}(E')$ is a covering sieve, and this for all $n\geq 0$. Let now $\Xi$ be a sieve over $E$, $\Lambda^{\geq n}(E)$ a covering sieve for $n$ fixed and positive, such that for all morphisms $f: E' \rarr E$ in $\Lambda^{\geq n}(E)$, $\Xi_{E'} = \{ E'' \rarr E' \: | \: \theta:E'' \rarr E' \rarr E \text{ is in } \Xi \}$ is a covering sieve $\Lambda^{\geq r}(E')$ for some $r$. Note that if a sieve has a morphism $E'' \rarr E$ then it also contains all morphisms from the overcategory $\Lambda_{/E''}$, hence we must have $r=1$, and this for all $f:E' \rarr E$ in $\Lambda^{\geq n}(E)$, hence $\Xi =\Lambda^{\geq n}(E)$ is a covering sieve. This being true for all $E$ it follows that we have a Grothendieck topology on $\Lambda$, making it into a site (\cite{GM}).\\

It is sometimes convenient to adopt an $\mathbb{S}^{\infty}$ approach (\cite{RG}). In that paper, an $\infty$-sphere is simply defined as a collection of infinitely many points each of which is connected to infinitely many other points. In our situation points of $\mathbb{S}^{\infty}[\Lambda]$ are the objects of $\Lambda$, a flow from one point $E$ to another point $E'$ being given by a sequence of morphisms $E \rarr E_1 \rarr \cdots \rarr E_n \rarr E'$ in $\Lambda$. A \textbf{flow} on $\mathbb{S}^{\infty}$ from $E$ to $E'$ is given by the collection of all such sequences and defines Map$_{\mathbb{S}^{\infty}}(E,E')$ that we denote by $\mathbb{S}^{\infty} - E - E'$, which is slightly misleading as not all points of $\mathbb{S}^{\infty} - E -E'$ may find themselves on a flow from $E \rarr E'$, but we find this representation to be a good visual aid. It is not difficult to go back and forth between $\Lambda$ and $\mathbb{S}^{\infty}$.\\

\subsection{$\Phi$  monoidal category of events and 3-manifolds} \label{fai}

We consider the collection of orientable 3-manifolds which we denote by $\CM^3$. We consider directed families of elements of $\CM^3$. Let $\overrightarrow{\CM^3}$ be the collection of such directed families of elements of $\CM^3$.\\

Ultimately 3-manifolds will be obtained as the boundary of 4-manifolds $M_L$ obtained by gluing 2-handles along framed links $L$ in $S^3$. Links are obtained by closing geometric braids, whose corresponding braids are morphisms in braided monoidal categories. Consider a braid $A_1 \otimes \cdots \otimes A_n \xrightarrow{B} A_1 \otimes \cdots \otimes A_n$. Preceding that braid by the identity on all objects we obtain:
\beq
A_1 \otimes \cdots \otimes A_n \xrightarrow{id} A_1 \otimes \cdots \otimes A_n \xrightarrow{B} A_1 \otimes \cdots \otimes A_n \nonumber
\eeq
If we close each individual braid, one obtains a 0-framed unlink with $n$ components followed by $L$. Gluing 2-handles along the unlink one obtains the boundary sum $\natural n S^2 \times D^2$ (\cite{GS}) whose boundary $\partial \natural n S^2 \times D^2 = \# n \partial (S^2 \times D^2)=\# n S^2 \times S^1$. Thus from that perspective, we regard a sequence of braids:
\beq
\otimes^n A_i \xrightarrow{B_1} \otimes^n A_i \cdots \xrightarrow{B_p} \otimes^n A_i \nonumber
\eeq
as being equivalent to having the following sequence:
\beq
\cdots \xrightarrow{id} \otimes^n A_i \xrightarrow{B_1} \otimes^n A_i \cdots \xrightarrow{B_p} \otimes^n A_i \xrightarrow{id} \otimes^n A_i \xrightarrow{id} \cdots \nonumber
\eeq
where identity braids extend to the left and to the right. Upon closing each individual corresponding geometric braid, one arrives at a finite sequence of framed links $L_1, \cdots, L_p$ in between sequences of unlinks on $n$ components. Surgery on those links yields a finite sequence of 4-dimensional 2-handlebodies $M_{L_1}, \cdots, M_{L_p}$ in between infinite sequences of $\natural n S^2 \times D^2$ which from our perspective are considered to be trivial, hence we regard a directed family such as:
\beq
\cdots \rarr \natural n S^2 \times D^2 \rarr M_{L_1} \rarr \cdots \rarr M_{L_p} \rarr \natural n S^2 \times D^2  \rarr \cdots \nonumber
\eeq
as really being equivalent to:
\beq
 M_{L_1} \rarr \cdots \rarr M_{L_p}
\eeq
Upon taking the individual boundaries of such directed families of 4-manifolds one respectively arrives at (after letting $M_i = M_{L_i}$):
\beq
\cdots \rarr \# n S^2 \times S^1 \rarr M_1 \rarr \cdots \rarr M_p \rarr \# n S^2 \times S^1  \rarr \cdots \nonumber
\eeq
and:
\beq
 M_1 \rarr \cdots \rarr M_p
\eeq
both of which we regard as being equivalent. $n$ will of course depend on the original morphisms in the braided monoidal category we started with. All such sequences however can be completed by such a boundary sum of $S^2 \times D^2$ in the case of 4-manifolds, and connected sums of sphere products $S^2 \times S^1$ in the case of 3-manifolds. Focusing our attention on 3-manifolds denote by $\StSo$ a variable whose domain is the collection of all such connected sums, which we regard as an identity for the composition of elements of $\overrightarrow{\CM^3}$.\\

Thus we define a directed family of orientable 3-manifolds $M_1 \rarr \cdots \rarr M_n$ to be equivalently given by completing this family by $\StSo$ to the right and to the left as in:
\beq
\cdots \rarr \StSo \rarr \StSo \rarr M_1 \rarr \cdots \rarr M_n \rarr \StSo \rarr \cdots \nonumber
\eeq
\begin{Phai}
We let $\Phi$ be the formal monoidal category whose objects are events and Hom objects are elements of $\overrightarrow{\CM^3}$.
\end{Phai}

In a first time we address the use of the adjective formal. If some events of $\Phi$ can be collected together and assembled into a genuine monoidal category, this is not necessarily true if we enlarge such a collection to all events. Thus for events belonging to distinct monoidal sub-categories we nevertheless enforce a tensor product regarded as a formal such operation. This means also that we have a formal unit object $I$ on $\Phi$, much in the same spirit as when defining $\StSo$. For $\CC$ and $\CC'$ two distinct monoidal categories, $e_{\CC}$, $e_{\CC,1}$ and $e_{\CC,2}$ objects of $\CC$, $e_{\CC'}$ an object of $\CC'$, $1_{\CC}$ the unit object of $\CC$, $\otimes$ the tensor product on $\Phi$, then:
\begin{align}
e_{\CC,1} \otimes e_{\CC,2}&:=e_{\CC,1} \otimes_{\CC} e_{\CC,2} \\
e_{\CC} \otimes I &= e_{\CC} \otimes_{\CC} 1_{\CC}  \\
I \otimes e_{\CC} &= 1_{\CC} \otimes_{\CC} e_{\CC}
\end{align}
We regard $e_{\CC} \otimes e_{\CC'}$ as being formal. We also have:
\begin{align}
\big( e_{\CC,1} \otimes e_{\CC'} \big) \otimes I &=  e_{\CC,1} \otimes \big( e_{\CC'}  \otimes I \big) \\
&=  e_{\CC,1} \otimes \big( e_{\CC'}  \otimes_{\CC'} 1_{\CC'} \big) \\
&= e_{\CC,1} \otimes e_{\CC'}
\end{align}

It is assumed that the orientable 3-manifolds in this definition are known at the onset and can be recovered from viewing $\Phi$ as a sheaf over $\Lambda$. Let $\Phi(x,y)$ be the set of morphisms between objects $x$ and $y$ of $\Phi$. An element of $\Phi(x,y)$ is a directed family, element of $\overrightarrow{\CM^3}$. Composition in $\Phi$ is obtained by concatenating directed families. For any triple $(x, y, z)$ of objects of $\Phi$, we define the composition:
\beq
\mu_{x,y,z}: \Phi(x,y) \otimes \Phi(y,z) \rarr \Phi(x,z)
\eeq
to be induced by the concatenation:
\begin{align}
(M_1 \rarr &\cdots \rarr M_n ) \times (N_1 \rarr \cdots \rarr N_p ) \nonumber \\
&\rarr (M_1 \rarr \cdots \rarr M_n \rarr N_1 \rarr \cdots \rarr N_p )
\end{align}
where $M_1 \rarr \cdots \rarr M_n $ is an element of $\Phi(x,y)$, $N_1 \rarr \cdots \rarr N_p  $ is an element of $\Phi(y,z)$ and the directed family above is obtained by concatenating those two directed families, which we can denote by $\{M_r\} \rarr \{N_s \}$. The identity for such a composition is taken to be $\StSo$ as in:
\begin{align}
 \StSo  \times (M_1 \rarr \cdots \rarr M_n ) &\rarr ( \StSo \rarr M_1 \rarr \cdots \rarr M_n ) \nonumber \\
  &\rarr ( \#p S^2 \times S^1 \rarr M_1 \rarr \cdots \rarr M_n ) \nonumber \\
 \sim  \cdots  \rarr \#p S^2 \times S^1 \rarr M_1 &\rarr \cdots \rarr M_n \rarr \#p S^2 \times S^1 \rarr \cdots  \nonumber \\
& \sim M_1 \rarr \cdots \rarr M_n
\end{align}
where $x=\otimes^p e_i$ and likewise $(M_1 \rarr \cdots \rarr M_n ) \times  \StSo  \rarr (M_1 \rarr \cdots \rarr M_n )$.\\
Associativity of the composition is obvious by concatenation:
\beq
\begin{CD}
\{M_n \} \times \{ N_m \} \times \{P_q \} @>>> \Big(\{M_n \} \rarr \{ N_m \}\Big) \times \{P_q \} \\
@VVV @VVV \\
\{M_n \} \times \Big( \{ N_m \} \rarr \{P_q \}\Big) @>>> \Big(\{M_n \} \rarr \{ N_m \} \rarr \{P_q \}\Big)
\end{CD}
\eeq

\section{Kirby categories} \label{section2}

\subsection{Kirby moves}
For completeness' sake, we show what the Kirby moves are at the level of tangles. Let $\CR$ be a ribbon category.\\

The blow up of an unknotted circle of framing $\pm 1$ is performed following the sequence below (we take an unknot with framing $+1$ for illustrative purposes). We start with a braid $B$:
\beq
\setlength{\unitlength}{0.5cm}
\begin{picture}(3,3)(0,0)
\multiput(0,3)(0.3,0){10}{\line(1,0){0.15}}
\put(1,1){$B$}
\multiput(0,0)(0.3,0){10}{\line(1,0){0.15}}
\end{picture}
\nonumber
\eeq
the dotted lines showing where the objects should be situated, $B$ being a morphism in $\CR$ from the object on top to the one below. Take the tensor product of those with the unit object $\mathbf{1}$:
\beq
\setlength{\unitlength}{0.5cm}
\begin{picture}(7,3)(0,0)
\multiput(0,3)(0.3,0){10}{\line(1,0){0.15}}
\put(3,3){$\otimes$}
\put(5,3){$\mathbf{1}$}
\put(1,1){$B$}
\multiput(0,0)(0.3,0){10}{\line(1,0){0.15}}
\put(3,0){$\otimes$}
\put(5,0){$\mathbf{1}$}
\end{picture}
\nonumber
\eeq
Stretch the braid $B$ down using trivial strands $id$:
\beq
\setlength{\unitlength}{0.5cm}
\begin{picture}(7,7)(0,0)
\multiput(0,6)(0.3,0){10}{\line(1,0){0.15}}
\put(3,6){$\otimes$}
\put(5,6){$\mathbf{1}$}
\put(1,4){$B$}
\multiput(0,3)(0.3,0){10}{\line(1,0){0.15}}
\put(3,3){$\otimes$}
\put(5,3){$\mathbf{1}$}
\put(1,1){$id$}
\multiput(0,0)(0.3,0){10}{\line(1,0){0.15}}
\put(3,0){$\otimes$}
\put(5,0){$\mathbf{1}$}
\end{picture}
\nonumber
\eeq
where in the lower band we can now blow up the framed unknot:
\beq
\setlength{\unitlength}{0.5cm}
\begin{picture}(7,7)(0,0)
\multiput(0,6)(0.3,0){10}{\line(1,0){0.15}}
\put(3,6){$\otimes$}
\put(5,6){$\mathbf{1}$}
\put(1,4){$B$}
\multiput(0,3)(0.3,0){10}{\line(1,0){0.15}}
\put(3,3){$\otimes$}
\put(5,3){$\mathbf{1}$}
\put(1,1){$id$}
\multiput(0,0)(0.3,0){10}{\line(1,0){0.15}}
\put(3,0){$\otimes$}
\put(5,0){$\mathbf{1}$}
\put(5,2){\oval(1,0.5)[t]}
\put(4.5,2){\line(1,-1){1}}
\qbezier(4.5,1)(4.5,1.5)(4.7,1.5)
\qbezier(5.5,2)(5.5,1.5)(5.3,1.5)
\put(5,1){\oval(1,0.5)[b]}
\end{picture}
\nonumber
\eeq
For the blow-down of the unknot, if the unknot is right next to the braid, pull it down in a band so that we end up in a situation like the last one above. Reverse the blow-up process above from there.\\

For the band sum move, we have two options: addition or subtraction. Whether it be an addition or a subtraction, we first have to introduce a copy of the link component being band summed over following the framing of the link under consideration. At the level of braids, without loss of generality, this means considering braids in the blackboard framing. The copy is effectively produced by something we call a \textbf{shadow map} $\delta$ that creates an unlabeled copy of an object directly to its left. We denote by $A_{\ast}$ this copy, a variable in Ob$(\CR)=\{A_i\}_{i \in I}$, $I$ an indexing set, hence $A_{\ast} =$ MotFr$_0(\CR)$ and we will simply refer to $A_{\ast}$ as a shadow object (see Appendix for details). Then:
\begin{align}
\delta: \text{Ob}(\CR) &\rarr \text{MotFr}_0 \otimes \text{Ob}(\CR) \nonumber \\
A &\mapsto A_{\ast} \otimes A
\end{align}

For one strand corresponding to a portion of the link component being summed over, we have to bring another strand corresponding to a portion of the link doing the band sum move close to that first strand. Crossings (or twists $\sigma$) may be necessary to do just this. Then we find ourselves in either of two situations. An addition corresponds to having
the shadow object having the same sign as the object it's originating from. Then we consider local windows on the braids such as in:
\beq
\setlength{\unitlength}{0.5cm}
\begin{picture}(6,5)(0,0)
\thicklines
\put(0,0){\line(1,0){6}}
\put(0,0){\line(0,1){4}}
\put(6,0){\line(0,1){4}}
\put(0,4){\line(1,0){6}}
\put(1,4.5){$A_1$}
\put(4,4.5){$A_{\ast}$}
\put(5,4.5){$A_2$}
\qbezier(1,4)(1.5,4)(1.5,2)
\qbezier(1.5,2)(1.5,0)(1,0)
\put(1.5,2){\vector(0,-1){0.5}}
\qbezier(4.5,4)(4,4)(4,2)
\qbezier(4,2)(4,0)(4.5,0)
\put(4,2){\vector(0,-1){0.5}}
\qbezier(5,4)(4.5,4)(4.5,2)
\qbezier(4.5,2)(4.5,0)(5,0)
\put(4.5,2){\vector(0,-1){0.5}}
\end{picture}
\nonumber
\eeq
where $\delta A_2= A_{\ast} \otimes A_2$. An addition can simply be done by replacing the identities on $A_1$ and $A_{\ast}$ by the twist map $\sigma: A_1 \otimes (A_{\ast}=A_1) \rarr (A_{\ast}=A_1) \otimes A_1$ as in:
\beq
\setlength{\unitlength}{0.5cm}
\begin{picture}(6,5)(0,0)
\thicklines
\put(0,0){\line(1,0){6}}
\put(0,0){\line(0,1){4}}
\put(6,0){\line(0,1){4}}
\put(0,4){\line(1,0){6}}
\put(1,4.5){$A_1$}
\put(4,4.5){$A_1$}
\put(5,4.5){$A_2$}
\put(1,4){\line(3,-4){3}}
\put(1.75,3){\vector(3,-4){0.5}}
\put(1,0){\line(3,4){1.3}}
\put(4,4){\line(-3,-4){1.3}}
\put(3.25,3){\vector(-3,-4){0.5}}
\qbezier(5,4)(4.5,4)(4.5,2)
\qbezier(4.5,2)(4.5,0)(5,0)
\put(4.5,2){\vector(0,-1){0.5}}
\end{picture}
\nonumber
\eeq
Once the transition is made from identity strands on $A_1$ and $A_{\ast}$ to a morphism that involves both objects, the shadow object $A_{\ast}$ takes on the identity of $A_1$ as indicated in parentheses above, or whatever object it becomes related to, which corresponds to a specialization of the variable $A_{\ast}$.\\

A subtraction occurs when $A_{\ast}$ and $A_2$ have opposite signs, in which case we find ourselves in the following situation:
\beq
\setlength{\unitlength}{0.5cm}
\begin{picture}(6,5)(0,0)
\thicklines
\put(0,0){\line(1,0){6}}
\put(0,0){\line(0,1){4}}
\put(6,0){\line(0,1){4}}
\put(0,4){\line(1,0){6}}
\put(1,4.5){$A_1$}
\put(4,4.5){$A_{\ast}$}
\put(5,4.5){$A_2$}
\qbezier(1,4)(1.5,4)(1.5,2)
\qbezier(1.5,2)(1.5,0)(1,0)
\put(1.5,2){\vector(0,-1){0.5}}
\qbezier(4.5,4)(4,4)(4,2)
\qbezier(4,2)(4,0)(4.5,0)
\put(4,2){\vector(0,1){0.5}}
\qbezier(5,4)(4.5,4)(4.5,2)
\qbezier(4.5,2)(4.5,0)(5,0)
\put(4.5,2){\vector(0,-1){0.5}}
\end{picture}
\nonumber
\eeq
and the subtraction can simply be done by replacing the identity maps on $A_1$ and $A_{\ast}$ by the concatenation of the maps $\varepsilon:  A_1 \otimes A_1^* \rarr \mathbf{1}$ and $\eta: \mathbf{1} \rarr A_1 \otimes A^{\ast}_1$ where $A_{\ast}$ has been specialized to $A_1 ^{\ast}$, as in:
\beq
\setlength{\unitlength}{0.5cm}
\begin{picture}(6,5)(0,0)
\thicklines
\put(0,0){\line(1,0){6}}
\put(0,0){\line(0,1){4}}
\put(6,0){\line(0,1){4}}
\put(0,4){\line(1,0){6}}
\put(1,4.5){$A_1$}
\put(4,4.5){$A_1^*$}
\put(5,4.5){$A_2$}
\put(2.5,4){\oval(3,3.5)[b]}
\put(1,3.5){\vector(0,-1){0.5}}
\put(4,3.5){\vector(0,1){0.5}}
\put(2.5,0){\oval(3,3.5)[t]}
\put(1,0.5){\vector(0,-1){0.5}}
\put(4,0){\vector(0,1){0.5}}
\qbezier(5,4)(4.5,4)(4.5,2)
\qbezier(4.5,2)(4.5,0)(5,0)
\put(4.5,2){\vector(0,-1){0.5}}
\end{picture}
\nonumber
\eeq

\subsection{The Kirby category $\CK$}

We define a formal topological monoidal category that we call the \textbf{Kirby category} $\CK$ as follows: its objects are \textbf{thickened events} $\delta e_i$, viewed as objects in a certain braided monoidal category with a twist map, each object of which has a right dual, wich is also known as a ribbon category. The monoidal structure on thickened events is the generalization to the thickened setting of the monoidal structure we had on $\Phi$, the unit object for such a structure being given by the thickened unit:
\beq
\delta I_{\Phi}=:I_{\CK}
\eeq
Regarding the definition of morphisms in $\CK$, recall that any morphism between any two same objects of a ribbon category can be represented by a geometric braid whose closure gives a framed link. We regard this link that we denote by $L$ as being in the boundary $S^3= \partial D^4$ of the 4-ball. Upon gluing 2-handles along such a framed link we obtain a 4-manifold $M_L$ (\cite{GS}). To fix notations, if $\CR$ is a ribbon category, $A_1$, $A_2, \cdots$, $A_n$ are objects of $\CR$, $B$ is a morphism from $A_1 \otimes \cdots \otimes A_n$ to itself, $L$ the closure of $B$, then to such objects and braid $B$ we associate the object $A_1 \otimes \cdots \otimes A_n$ of some category pre$\CK(\CR)$, with $M_L$ the following morphism:
\beq
A_1 \otimes \cdots \otimes A_n \xrightarrow{M_L} A_1 \otimes \cdots \otimes A_n
\nonumber
\eeq
As done in the previous section, recall that extending our braid using trivial strands, or equivalently the identity on objects in $\CR$, one arrives at:
\beq
\cdots \xrightarrow{id} A_1 \otimes \cdots \otimes A_n \xrightarrow{B} A_1 \otimes \cdots \otimes A_n
\xrightarrow{id} \cdots \nonumber
\eeq
which upon closure of each geometric braid representing the above braids yields:
\beq
\cdots \xrightarrow{nS^1} A_1 \otimes \cdots \otimes A_n \xrightarrow{L} A_1 \otimes \cdots \otimes A_n
\xrightarrow{nS^1} \cdots \nonumber
\eeq
where $nS^1$ is short for 0-framed unlink with $n$ components. Surgery along those individuals links yields:
\beq
\cdots \xrightarrow{\natural n S^2 \times D^2} A_1 \otimes \cdots \otimes A_n \xrightarrow{M_L} A_1 \otimes \cdots \otimes A_n \xrightarrow{\natural n S^2 \times D^2} \cdots \nonumber
\eeq
and in the same manner that we defined $\StSo$ in the previous section we are led to defining $\StDt$ as a variable that represents boundary sums $\natural p S^2 \times D^2$ and which we regard as an identity. This means that we have:
\begin{align}
 \StDt  \times (M_{L_1} \rarr \cdots \rarr M_{L_n} ) &\rarr ( \StDt \rarr M_{L_1} \rarr \cdots \rarr M_{L_n} ) \nonumber \\
  &\rarr ( \natural p S^2 \times D^2 \rarr M_{L_1} \rarr \cdots \rarr M_{L_n} ) \nonumber \\
 \sim  \cdots \rarr \natural p S^2 \times D^2 \rarr M_{L_1} &\rarr \cdots \rarr M_{L_n} \rarr \natural p S^2 \times D^2 \rarr \cdots  \nonumber \\
& \sim M_{L_1} \rarr \cdots \rarr M_{L_n}
\end{align}
and likewise $(M_{L_1} \rarr \cdots \rarr M_{L_n} ) \times  \StDt  \rarr (M_{L_1} \rarr \cdots \rarr M_{L_n} )$.\\

This can be formalized as follows. One can view the procedure of constructing 4-manifolds from links as a map $\phi: \Omega S^3 \rarr \CM^4, L \mapsto M_L$, where $\CM^4$ denotes the collection of all 4-manifolds. If we further denote by $\overrightarrow{\Omega S^3}$ the collection of directed families of links in $\Omega S^3$, and by $\overrightarrow{\CM^4}$ the collection of directed families of elements of $\CM^4$, then one can extend the surgery map $\phi$ to:
\begin{align}
\phi: \overrightarrow{\Omega S^3} &\rarr \overrightarrow{\CM^4} \nonumber \\
 (L_1 &\rarr \cdots \rarr  L_n) \nonumber \\
& \mapsto  (M_{L_1} \rarr \cdots \rarr  M_{L_n})
\end{align}
This can be further generalized by defining a category $\overline{\CR}$ whose objects are the same as those of $\CR$ but whose morphisms are directed families of links $L=\overline{B}$, $B \in \text{Mor}(\CR)$, and by defining a functor $\overline{\CR} \rarr \text{pre}\CK(\CR)$ that is the identity on objects and is $\phi$ on Hom objects.\\

Filtered families of morphisms in $\CR$:
\beq
\Bigg\{  \begin{CD} A_1 \otimes \cdots \otimes A_n \\
                         @VV B^{(1)}V \\
                          A_1 \otimes \cdots \otimes A_n \end{CD} ,
           \begin{CD} A_1 \otimes \cdots \otimes A_n \\
                         @VV B^{(2)}V \\
                          A_1 \otimes \cdots \otimes A_n \end{CD} ,                          \cdots
            \begin{CD} A_1 \otimes \cdots \otimes A_n \\
                         @VV B^{(p)}V \\
                          A_1 \otimes \cdots \otimes A_n \end{CD}\Bigg\}
\eeq
correspond to directed families:
\beq
\begin{CD} A_1 \otimes \cdots \otimes A_n \\
                         @VV B^{(1)}V \\
                          A_1 \otimes \cdots \otimes A_n \end{CD} \rarr
          \begin{CD} A_1 \otimes \cdots \otimes A_n \\
                         @VV B^{(2)}V \\
                          A_1 \otimes \cdots \otimes A_n \end{CD} \rarr
            \cdots \rarr
            \begin{CD} A_1 \otimes \cdots \otimes A_n \\
                         @VV B^{(p)}V \\
                          A_1 \otimes \cdots \otimes A_n \end{CD}
\eeq
which upon closure correspond to the directed family $L^{(1)} \rarr \cdots \rarr L^{(p)}$ of links, $L^{(i)} = \overline{B^{(i)}}$, a morphism in $\overline{\CR}$ :
\beq
A_1 \otimes \cdots \otimes A_n \xrightarrow{L^{(1)} \rarr \cdots \rarr L^{(p)}} A_1 \otimes \cdots \otimes A_n
\eeq
which under the surgery functor $\phi$ maps to the following morphism in pre$\CK(\CR)$:
\beq
A_1 \otimes \cdots \otimes A_n \xrightarrow{ M_{L^{(1)}} \rarr \cdots \rarr M_{L^{(p)}}} A_1 \otimes \cdots \otimes A_n
\eeq
We define $\CK$ to be the essential image pre$\CK(\CR)$ of $\overline{\CR}$ under $\phi$ where $\CR$ has for objects thickened events $\delta e_i$'s.

\subsection{Moduli categories}
It is known (\cite{L}) that two 4-manifolds $M_L$ and $M_{L'}$ have the same boundary if and only if $L$ and $L'$ are related by a sequence of Kirby moves, that is blow-ups and blow-downs of unknots with framing $\pm 1$ and band sum moves (equ. handle slides) (\cite{R}). This motivates the following definition:
\begin{KirbyEqu}
Two 4-manifolds $M_L$ and $M_{L'}$ are said to be \textbf{Kirby equivalent} if and only if $L$ and $L'$ are related by a sequence of Kirby moves.
\end{KirbyEqu}
This relation is clearly symmetric and transitive. Reflexivity follows by, for example, blowing up an unknotted circle with framing $\pm 1$ and blowing it down.\\

The boundary map on manifolds induces a functor:
\begin{align}
\partial: \CK &\rarr \Phi \nonumber \\
\delta e_1 \otimes \cdots \otimes \delta e_n &\mapsto e_1 \otimes \cdots \otimes e_n \nonumber \\
M_L &\mapsto \partial M_L = M
\end{align}
where $M$ is an orientable 3-manifold. We can extend this functor to sequences:
\beq
\partial(\cdots \rarr M_{L_1} \rarr \cdots \rarr M_{L_n} \rarr \cdots ) := \cdots \rarr \partial M_{L_1} \rarr \cdots \rarr \partial M_{L_n} \rarr \cdots
\eeq

From \cite{R}, it follows that:
\begin{Mod}
Under the boundary functor $\partial: \CK \rarr \Phi$, $\Phi$ appears as a moduli category of Kirby equivalence classes $[M_L]$, $M_L$ a morphism in $\CK$
\end{Mod}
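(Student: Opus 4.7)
The plan is to interpret the statement as saying that the boundary functor $\partial:\CK\rarr\Phi$ descends to a monoidal equivalence $\overline{\partial}:\CK/\!\sim\;\xrightarrow{\sim}\;\Phi$, where $\CK/\!\sim$ is obtained from $\CK$ by quotienting Hom-sets by Kirby equivalence (extended component-wise to directed families). The argument then decomposes into four steps: well-definedness of $\overline{\partial}$, injectivity on morphism classes, essential surjectivity, and monoidal compatibility.

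For well-definedness, if $M_L$ and $M_{L'}$ are Kirby equivalent then by definition $L$ and $L'$ differ by a sequence of blow-ups, blow-downs, and band sum moves, and the cited theorem of \cite{R} guarantees $\partial M_L\cong \partial M_{L'}$ as oriented 3-manifolds, so $\partial$ is constant on equivalence classes of single morphisms; applying this pointwise along a directed family shows $\partial$ descends to $\overline{\partial}$. Injectivity on classes is the converse direction of the same theorem: if $\partial M_L\cong \partial M_{L'}$ then $L$ and $L'$ are related by Kirby moves, so $[M_L]=[M_{L'}]$, and this again extends pointwise to directed families.

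For essential surjectivity, on objects the identification $\partial(\delta e_1\otimes\cdots\otimes\delta e_n)=e_1\otimes\cdots\otimes e_n$ is immediate from the definition of the functor. On morphisms, an arbitrary element of $\overrightarrow{\CM^3}$ is a directed family $M_1\rarr\cdots\rarr M_p$ of orientable 3-manifolds, and by the Lickorish--Wallace theorem each $M_i$ is realizable as $\partial M_{L_i}$ for a framed link $L_i\subset S^3=\partial D^4$, yielding a preimage $M_{L_1}\rarr\cdots\rarr M_{L_p}$ in $\CK$. Compatibility with the formal identity conventions follows from $\partial\,\StDt=\StSo$, so padding with $\StDt$ in $\CK$ is matched by padding with $\StSo$ in $\Phi$. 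Monoidal compatibility is then immediate: $\partial$ distributes over the tensor product on objects by construction, acts component-wise on morphisms, and sends the unit $I_{\CK}=\delta I_{\Phi}$ to $I_{\Phi}$.

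The main obstacle is conceptual rather than technical: one must first fix a precise meaning for ``moduli category'' suitable to the formal monoidal setting of $\CK$ and $\Phi$, and one must verify that Kirby equivalence extends coherently from single 4-manifolds to directed families in a way compatible with the concatenation composition in $\CK$. Once these conventions are pinned down, the substantive content reduces to the classical Kirby correspondence cited in \cite{R} together with Lickorish--Wallace surjectivity; everything else is formal bookkeeping on padding by $\StDt$ and $\StSo$.
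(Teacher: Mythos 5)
Your argument is correct and takes essentially the same route as the paper, whose entire proof consists of the citation ``From \cite{R}, it follows that\dots'' resting on the previously stated fact that $M_L$ and $M_{L'}$ have the same boundary if and only if $L$ and $L'$ are related by Kirby moves. The only substantive point you make explicit that the paper leaves implicit is the appeal to the Lickorish--Wallace realization theorem for essential surjectivity onto $\overrightarrow{\CM^3}$ (which the paper buries in its reference \cite{L}); the remaining steps --- well-definedness, injectivity on classes, padding by $\StDt$ and $\StSo$, and componentwise extension to directed families --- are exactly the formal bookkeeping the paper takes for granted.
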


\section{Fields and 4-manifolds} \label{section3}
\subsection{Open questions and constrained manifolds}
What we have worked with in the previous section is smooth 4-manifolds $M_L$ built from gluing 2-handles along framed links $L \subset S^3 = \partial D^4$, which is an example of a handle decomposition (\cite{GS}). To have a handle decomposition a 4-manifold must be smoothable. Not all 4-folds are smoothable however, and correspondingly we have an obstruction theory for that (\cite{H}). We would like to have something akin to handle decompositions in the non-smooth case or at the very least some structure intrinsic to non-smooth manifolds on which we have an $S^1$-action. The reason for even considering non-smoothable 4-manifolds is that we are partly interested in physical fields that have 4-manifolds for target space, hence we should really consider a generalization of $\CK$ whose morphisms are directed families of general 4-manifolds. The reason for wanting such 4-manifolds to have an intrinsic $S^1$-action is that they are being seen as dynamical objects insofar as they are target spaces of dynamical fields. Thus we would like such spaces to be constructed from some geometric structure on which we have an $S^1$-action. The question we are asking ourselves is how far into generalizations can we go if such an $S^1$-action is something we deem to be essential to defining dynamical spaces.\\

In a first time we ask whether for 4-manifolds $M$ we have a surjective map $H_{S^1}^*(LM) \rarr H_{S^1}^*(M)$ so that deformations of $M$ are induced by some $S^1$-action on the loop space $LM$, and for what kinds of 4-manifolds do we have such a map. Jones and Petrak define in \cite{JP} a variant of equivariant cohomology $h_{S^1}^*(X) :=H^*(\Omega_{S^1}^*(X)[[u,u^{-1}]])$, $u$ an indeterminate of degree 2, with differential $d+u\iota$, $d$ the differential on $\Omega^*(X)$, $\iota: \Omega^n(X) \rarr \Omega^{n-1}(X)$ the interior product with the vector field generating the $S^1$ action, a cohomology theory for which we do have an isomorphism $h_{S^1}^*(LM) \rarr h_{S^1}^*(M)$. This is the much sought-after $S^1$-action on an underlying structure: the $S^1$ action on $LM$. This is valid for smooth manifolds however. To have a similar statement for general 4-manifolds is prohibitely difficult to establish.\\

Thus it seems generalizing $\CK$ to some category whose morphisms are differentiable manifolds or even singular 4-manifolds, not simply smooth manifolds, should be obtained not by generalizing some intrinsic $S^1$-action but should be approached differently. Whether it be singularities or $C^k$-differentiable structures for example, those we regard as additional constraints that are brought in by hand. They constitute a collection $\fC$ of \textbf{constraints}. One can regard constraints as graded families of properties $\{\mathbf{P}^{(i)} \}_{i \in I}$, $I$ a subset of the integers with 1 as smallest element, for which for all $i \in I$ we have a well-defined map or transition from $\mathbf{P}^{(i)}$ to $\mathbf{P}^{(i+1)}$, and constraints are transitions $\mathbf{P}^{(i)} \rarr \mathbf{P}^{(j)}$ that result from a composition of transitions within a given family, $\mathbf{P}^{(1)}$ being a property of smooth 4-manifolds within such a family. This leads us to defining collections of 4-manifolds $M^{4, \mathbf{P}(i)}$ that share the same property $\mathbf{P}(i)$ which we denote by $\CM^{4, \mathbf{P}(i)}$. Such 4-manifolds with a given property $\mathbf{P}(i)$ other than $\mathbf{P}^{(1)}$ will be referred to as \textbf{constrained 4-manifolds}. This means that ultimately if we impose a constraint $\mathbf{P}$ on smooth 4-manifolds then we can find a family of constraints originating at $\CM^4$ with a property $\mathbf{P}^{(1)}$ for which $\mathbf{P} = \mathbf{P}^{(i)}$ for some $i>1$ and the map on collections of 4-manifolds induced by the transition $\mathbf{P}^{(1)} \rarr
\mathbf{P}^{(i)}$ we denote by $\CM^4 \xrightarrow{d[\mathbf{P}^{(i)}]} \CM^{4,\mathbf{P}^{(i)}}$.\\

This constraint picture however abandons the concept of having an intrinsic $S^1$ action on the constrained manifolds as pointed out above. In most cases an $S^1$ action on smooth 4-manifolds from which constrained 4-manifolds are obtained will not survive the transitions of constraints. Thus for constrained manifolds we have no satisfactory notion of an intrinsic $S^1$ action, a manifestation of the fact that constrained 4-manifolds are still dynamical objects.\\

If we denote by an embedding map the concept of having a physical theory corresponding to a Lagrangian density $\CL$ living on an object $M$ of $\CM^{4,\mathbf{P}^{(i)}}$, then for families of constraints an interesting question which we will not pursue here is whether we have a pullback $d[\mathbf{P}^{(i)}]^*$ such that the following diagram commutes:
\beq
\setlength{\unitlength}{0.5cm}
\begin{picture}(10,9)(0,0)
\thicklines
\put(0,1){$d[\mathbf{P}^{(i)}]^* \CL$}
\put(9.5,1){$\CL$}
\put(1,7){$\CM^{4,\mathbf{P}}$}
\put(9,7){$\CM^{4,\mathbf{P}^{(i)}}$}
\put(4,7){\vector(1,0){4}}
\put(5,8){$d[\mathbf{P}^{(i)}]$}
\put(9.5,3){$\cup$}
\put(10,3){\vector(0,1){3}}
\multiput(8,1.2)(-0.3,0){14}{\line(1,0){0.2}}
\put(4.2,1.2){\vector(-1,0){0.3}}
\put(1.5,3){$\cup$}
\put(2,3){\vector(0,1){3}}
\end{picture}
\eeq

Thus at this point our generalization of $\CK$ will be denoted by $\CK^{\fC}$, its objects are thickened events $\delta e_i$, the same as those of $\CK$. However morphisms will be constrained 4-manifolds. Since ultimately we have physical theories defined on such spaces we enhance $\CK^{\fC}$ to $\CK^{\bCL, \fC}$, keeping the same objects but with morphisms now being pairs $(\CL, M^4)$, $\CL$ a Lagrangian density for a physical theory defined on $M^4$ a constrained 4-manifold. In a sense we have a forgetful functor $\CK^{\bCL, \fC} \rarr \CK^{\fC}$ exhibiting $\CK^{\fC}$ as a moduli category. If we regard Lagrangian densities as being deformations, we aim at establishing the existence of a functor $\mathbb{S}^{\infty}[\Lambda] \leftarrow \CK^{\bCL, \fC}$, which will be done elsewhere.\\

\subsection{Physical theories and Derived Algebraic Geometry}
Focusing our attention on the Lagrangian densities, dynamics in physical theories is derived from equations of motion which themselves are obtained from a Lagrangian. Recall that for a space $X$, a smooth manifold of dimension $n$, $E \rarr X$ a bundle in which fields take their values with corresponding jet bundle $j_{\infty}E \rarr X$, $\Omega^{\cdot, \cdot}(j_{\infty}E)$ the corresponding variational bicomplex, a Lagrangian with values in $E$ is an element $L \in \Omega^{n,0}(j_{\infty}E)$. The variational bicomplex is essentially the deRham complex of $j_{\infty}E$ with differential forms bigraded by horizontal derivations with respect to $X$ and vertical derivations along the fibers of $j_{\infty}E$ as $D=d + \delta$ if $d$ is the deRham differential and $\delta$ is the variational differential (\cite{T}, \cite{D}). It is really the Lagrangian density $\CL \in j_{\infty}E$ however that we are looking at, and it would be preferable to recast this in the formalism of D-modules.\\

We could have worked with D-modules over schemes but those are not well-behaved with respect to quotients so it is natural instead to consider algebraic stacks. Having obstruction problems in mind it is actually better to work with derived algebraic stacks which are built from simplicial rings. Also from \cite{To} derived stacks come into play for obstruction problems and those we encounter under the form of constraints in 4-manifold theory. From \cite{To} for $k$ a commutative ring, the Segal category of derived stacks is denoted $dSt(k)$, it's obtained as the homotopy theory of the model category of derived stacks [HAG II]. The main point is that we have a well-defined theory of D-modules on derived algebraic stacks (\cite{DG}).\\

We consider D-modules over derived algebraic stacks given by a map:
\beq
\mathcal{D}: dSt(k)^{op} \rarr Cat_{\infty}
\eeq
given on objects by the assignment, for $\mathcal{X} \in dSt(k)$ of $\mathcal{D}(\mathcal{X})$ constructed from a functor:
\begin{align}
\Big( DGSch_{/\mathcal{X}} \Big)^{op} &\rarr Cat_{\infty} \nonumber \\
(Y, f:Y \rarr \mathcal{X}) &\mapsto \mathcal{D}(Y)
\end{align}
via:
\beq
\mathcal{D}(\mathcal{X}) = \lim_{\substack{ \leftarrow \\ (Y,f) \in (DGSch_{/\mathcal{X}})^{op}}} \mathcal{D}(Y)
\eeq
the precise definition of which can be found in \cite{DG}.\\

For $\mathcal{X}$ a derived algebraic stack, $\mathcal{D}(\mathcal{X})$ the category of D-modules over $\mathcal{X}$, $\mathcal{F}$, $\mathcal{G}$ and $\mathcal{H}$ in $\mathcal{D}(\mathcal{X})$, we consider, for illustrative purposes, the Lagrangian density $\CL$ defined as follows:
\beq
\CL=Y \partial \psi + \zeta
\eeq
where $Y \in \Gamma(\mathcal{X},\mathcal{F})$, $\psi \in \Gamma(\mathcal{X},\mathcal{G})$, and $\zeta \in \Gamma(\mathcal{X},\mathcal{H})$. Let $\varphi$ be a morphism from $\mathcal{F}$ to $\mathcal{G}$, $\rho$ a morphism from $\mathcal{G}$ to $\mathcal{H}$. Then $\CL$ should really be written:
\begin{align}
\CL&=\rho \Big( \varphi (Y) \bullet_{\mathcal{G}} \partial \psi \Big) +_{\mathcal{H}} \zeta \nonumber \\
&=\Bigg\{\rho \Big( \varphi  \bullet_{\mathcal{G}} \Big) +_{\mathcal{H}}\Bigg \}(Y,\partial \psi, \zeta)
\end{align}

We have a functor:
\beq
\mathcal{D}(\mathcal{X}) \otimes \mathcal{D}(\mathcal{X}) \rarr \mathcal{D}(\mathcal{X})
\eeq
a composition:
\begin{align}
\mathcal{D}(\mathcal{X}) \otimes \mathcal{D}(\mathcal{X}) &\rarr \mathcal{D}(\mathcal{X}\times \mathcal{X})\xrightarrow{\Delta^{!}} \mathcal{D}(\mathcal{X}) \nonumber \\
\mathcal{F} \otimes \mathcal{G} & ------\dashrightarrow \mathcal{F} \overset{!}\otimes \mathcal{G}
\end{align}
giving $\mathcal{D}(\mathcal{X})$ a structure of a symmetric monoidal structure as observed in (\cite{DG}). We have a preliminary map $preL$ that selects those D-modules $\mathcal{F}$, $\mathcal{G}$ and $\mathcal{H}$ such that:
$\CL \in \Gamma(\mathcal{X}, \mathcal{F} \overset{!} \otimes \mathcal{G} \overset{!} \otimes \mathcal{H})$:

\beq
\setlength{\unitlength}{0.5cm}
\begin{picture}(24,5)(0,0)
\put(0,4){$preL: \mathcal{D}(\mathcal{X})$}
\put(8,4){$\mathcal{D}(\mathcal{X}) \otimes \mathcal{D}(\mathcal{X}) \otimes \mathcal{D}(\mathcal{X})$}
\put(19,4){$\mathcal{F} \otimes \mathcal{G} \otimes \mathcal{H}$}
\put(11.5,0){$\mathcal{D}(\mathcal{X})$}
\put(16,0){$\ni$}
\put(19,0){$\mathcal{F} \overset{!}\otimes \mathcal{G} \overset{!}\otimes \mathcal{H}$}
\put(5,4.2){\vector(1,0){2}}
\put(5,5){$\otimes \Delta ^2$}
\put(16.5,4.2){\vector(1,0){2}}
\multiput(21,3.2)(0,-0.3){7}{\line(0,1){0.2}}
\put(21,1.4){\vector(0,-1){0.3}}
\put(12,3.2){\vector(0,-1){2}}
\end{picture}
\eeq

Then $\CL$ itself can be presented as a tree:
\beq
\setlength{\unitlength}{0.5cm}
\begin{picture}(5,7)(0,0)
\put(-1,5.5){$Y$}
\put(0,5.5){$\overset{!}\otimes$}
\put(1,5.5){$\partial \psi$}
\put(3,5.5){$\overset{!}\otimes$}
\put(4,5.5){$\zeta$}
\put(0,5){\vector(1,-1){1}}
\put(0,4){$\varphi$}
\put(1.5,5){\vector(0,-1){1}}
\put(1,3.5){$\bullet_{\mathcal{G}}$}
\put(2,3){\vector(1,-1){1.5}}
\put(2,2){$\rho$}
\put(4,5){\vector(0,-1){3.5}}
\put(3.5,0.8){$+_{\mathcal{H}}$}
\end{picture}
\eeq
If we denote by L$^{\text{alg}}$ this map, which can be encoded by an operad, then we have:
\beq
\Gamma(\mathcal{X}, \mathcal{D}(\mathcal{X})) \xrightarrow{\text{L}^{\text{alg}} \circ \text{preL}= \CL} \Gamma(\mathcal{X}, \mathcal{D}(\mathcal{X}))
\eeq
Note that preL is induced by $\CL$ and Verdier duality. Indeed $\CL$ is built from elements of:
\beq
\text{Map}_{\mathcal{D}(\mathcal{X})}(\mathbf{D}^{\text{Verdier}}_{\mathcal{X}}(\mathcal{F}), \mathcal{G}) \simeq \Gamma(\mathcal{X}, \mathcal{F} \overset{!}\otimes \mathcal{G})
\eeq
for $\mathcal{F}$ and $\mathcal{G}$ D-modules on $\mathcal{X}$. Let $\varphi_{\text{BV}}:=(,)-\Delta$ where $(,)$ is a bracket on $\Gamma(\mathcal{X}, \mathcal{D}(\mathcal{X}))$ and $\Delta$ is an appropriately chosen BV operator (\cite{S}). We have the sequence:
\beq
\Gamma(\mathcal{X}, \mathcal{D}(\mathcal{X})) \xrightarrow{\mathbf{\CL}} \Gamma(\mathcal{X}, \mathcal{D}(\mathcal{X})) \xrightarrow{\varphi_{\text{BV}}} \Gamma(\mathcal{X}, \mathcal{D}(\mathcal{X}))
\eeq
and $Ker \varphi_{\text{BV}}$ is given by those maps $\mathbf{\CL}$ that correspond to a physical Lagrangian.\\

\section{$\CRi^+$, or the $\infty$-category of $\infty$-ribbons} \label{section4}
Braids close to links. Thus it is natural to ask what are morphisms in braided monoidal $\infty$-categories closing to. Further since we will project the corresponding algebra to the level of the Kirby category, which involves framed links, one would like to introduce a notion of framing at the level of braided monoidal $\infty$-categories, whence the need to develop a notion of ribbon $\infty$-categories.\\

In \cite{Lu1}, the observation is being made that for $\CC$ an ordinary category, endowing this latter with the structure of a braided monoidal category is equivalent to endowing its nerve $N(\CC)$ with the structure of an $\mathbb{E}_2$-monoidal $\infty$-category (equiv. an $\mathbb{E}_2$-algebra object of Cat$_{\infty}$). $\mathbb{E}_2$ refers to the little 2-disks operad, and an $\mathbb{E}_2$-algebra object of Cat$_{\infty}$ can equivalently be rephrased as being an $\infty$-algebra over the $\infty$-operad $\mathbb{E}_2$. We will define all these terms shortly. It is thus natural to define, as done in \cite{Lu1}, a braided monoidal $\infty$-category to be an $\mathbb{E}_2$-algebra in Cat$_{\infty}$.\\

In a first time we will introduce (to some extent) all the relevant terminology. We will follow by giving the definition of a braided monoidal $\infty$-category just as it is done in \cite{Lu1}. Finally we will generalize that to the notion of a twisted braided monoidal $\infty$-category.\\

\subsection{Background on $\infty$-operads}
As usual we will denote by $\Gamma$ the Segal category of pointed finite sets.
\begin{iop} (\cite{Lu1})
An \textbf{$\infty$-operad} $\mathcal{O}^{\otimes}$ is defined to be given by a functor $p:\mathcal{O}^{\otimes} \rarr N(\Gamma)$ of $\infty$-categories subject to the following conditions:
\begin{itemize}
\item For any inert morphism (a morphism whose pre-image is a single element) $f: \langle m \rangle \rarr \langle n \rangle$ in $N(\Gamma)$, for any object $x$ in $\mathcal{O}^{\otimes}_{\langle m \rangle}$, there is a $p$-cocartesian morphism $\overline{f}:x \rarr x'$ in $\mathcal{O}^{\otimes}$, lifting $f$, and inducing a functor $f_{!}:\mathcal{O}^{\otimes}_{\langle m \rangle} \rarr \mathcal{O}^{\otimes}_{\langle n \rangle}$
\item For $x$ in $\mathcal{O}^{\otimes}_{\langle m \rangle}$, $x'$ in $\mathcal{O}^{\otimes}_{\langle n \rangle}$, $f: \langle m \rangle \rarr \langle m \rangle$ a morphism in $\Gamma$, Map$^f_{\mathcal{O}^{\otimes}}(x,x')$ the union of the connected components of Map$_{\mathcal{O}^{\otimes}}(x,x')$ over $f$, $p$-cocartesian morphisms $x' \rarr x'_i$ lying over the inert morphisms $\rho_i: \langle n \rangle \rarr \langle 1 \rangle$, $1 \leq i \leq n$, then we have a homotopy equivalence:
    \beq
    \text{Map}^f_{\mathcal{O}^{\otimes}}(x,x') \rarr \prod_{1 \leq i \leq n}\text{Map}^{\rho_i \circ f}_{\mathcal{O}^{\otimes}}(x,x'_i)
    \nonumber
    \eeq
\item For any finite collection of objects $x_1, \cdots, x_n$ of $\mathcal{O}^{\otimes}_{\langle 1 \rangle}$ there is some object $x$ of $\mathcal{O}^{\otimes}_{\langle n \rangle}$ and a collection of $p$-cocartesian morphisms $x \rarr x_i$ covering the morphisms $\rho_i: \langle n \rangle \rarr \langle 1 \rangle$.
\end{itemize}
\end{iop}
\begin{NGamma}
$N(\Gamma)$ is an $\infty$-operad.
\end{NGamma}
\begin{Catinf}
As in \cite{W} let $\CK$ denote the model category of simplicial sets with the Kan model structure, PC$(\CK)$ the category of $\CK$-enriched precategories, a model category of $\infty$-categories. Cat$_{\infty}$ is the localization of PC$(\CK)$. Since this latter turns out to be a symmetric simplicial model category for the cartesian product, it follows that Cat$_{\infty}$ is a symmetric monoidal $\infty$-category (\cite{W}), hence it can be viewed as an $\infty$-operad (\cite{Lu1}).
\end{Catinf}
\begin{mapofops} (\cite{Lu1})\label{iopmaps}
$\mathcal{O}^{\otimes}$ and $\mathcal{O'}^{\otimes}$ being two $\infty$-operads, a \textbf{map of $\infty$-operads} from $\mathcal{O}^{\otimes}$ to $\mathcal{O'}^{\otimes}$ is a map of simplicial sets $f: \mathcal{O}^{\otimes} \rarr \mathcal{O'}^{\otimes}$ such that the following diagram commutes:
\beq
\setlength{\unitlength}{0.5cm}
\begin{picture}(8,5)(0,0)
\put(0,3){$\mathcal{O}^{\otimes}$}
\put(2,3){\vector(1,0){3}}
\put(3,3.5){$f$}
\put(6,3){$\mathcal{O'}^{\otimes}$}
\put(1,2.5){\vector(1,-1){1.7}}
\put(3,0){$N(\Gamma)$}
\put(5.8,2.5){\vector(-1,-2){0.8}}
\end{picture}
\nonumber
\eeq
and $f$ carries inert morphisms in $\mathcal{O}^{\otimes}$, that is $p$-cocartesian morphisms projecting down to inert morphisms in $N(\Gamma)$, to inert morphisms in $\mathcal{O'}^{\otimes}$.
\end{mapofops}
Fibrations of $\infty$-operads are maps between $\infty$-operads that in addition are categorical fibrations, which we now define:
\begin{catfib} (\cite{Lu2})
There is a left proper combinatorial model structure on the category of simplicial sets for which cofibrations are monomorphisms and categorical equivalences are maps of simplicial sets $S \rarr S'$ inducing simplicial functors $\fC[S] \rarr \fC[S']$ that are equivalences of simplicial categories, under which a map of simplicial sets is called a \textbf{categorical fibration} if it has the right lifting property with respect to cofibrations and categorical equivalences.
\end{catfib}
\begin{fibofop} (\cite{Lu1})
A map of $\infty$-operads is called a \textbf{fibration of $\infty$-operads} if it is a categorical fibration.
\end{fibofop}

If $\CC^{\otimes} \rarr \mathcal{O}^{\otimes}$ is a fibration of $\infty$-operads, $\mathcal{D}^{\otimes} \rarr \mathcal{O}^{\otimes}$ is a map of $\infty$-operads, a $\mathcal{D}^{\otimes}$-algebra in $\CC^{\otimes}$ is a map of $\infty$-operads from $\mathcal{D}^{\otimes}$ to $\CC^{\otimes}$ over $\mathcal{O}^{\otimes}$ (\cite{Lu1}). For us $\mathcal{O}^{\otimes}$ will be N$(\Gamma)$, in which case $\mathcal{D}^{\otimes}$-algebra objects of $\CC^{\otimes}$ are given by the $\infty$-category Alg$_{\mathcal{D}^{\otimes}}(\CC^{\otimes})$ of $\infty$-operad maps of \ref{iopmaps}. In particular a braided monoidal $\infty$-category is an E$_2$-algebra object in Cat$_{\infty}$ (\cite{Lu1}).\\

In a subsequent subsection we will make use of a result of \cite{Lu1} that states that the operadic nerve of a fibrant simplicial colored operad is an $\infty$-operad. We define these terms presently.\\

\begin{coloredop}(\cite{Lu1})
For a precise definition the reader is refered to the reference. For our purposes it suffices to note that a \textbf{colored operad} $\mathcal{O}$ consists of a collection $\{X, Y, Z, \cdots\}$ that is considered to be a collection of objects (or colors) of $\mathcal{O}$. For any indexing set $I$, for any collection of objects $\{X_i\}_{i \in I}$ of $\mathcal{O}$, $Y$ an object of $\mathcal{O}$, then we denote by Mul$_{\mathcal{O}}(\{X_i\}_{i \in I}, Y)$ the set of morphisms from the collection $\{X_i\}_{i \in I}$ to $Y$. We have a well-defined notion of composition, for which we have a collection of morphisms $\{id_X \in \text{Mul}_{\mathcal{O}}(\{X\}, X)\}_{X \in \mathcal{O}}$ that is both a left and a right unit. We have associativity of the composition.
\end{coloredop}
A colored operad becomes a \textbf{simplicial colored operad} upon replacing the morphism sets Mul$_{\mathcal{O}}(\{X_i\}_{i \in I}, Y)$ by simplicial sets. If we still denote by $\mathcal{O}$ the resulting simplicial colored operad, we can define an intermediate simplicial  category $\mathcal{O}^{\otimes}$ by taking as objects pairs $X=(\langle n \rangle, (X_1, \cdots, X_n))$, $X_1, \cdots, X_n$ being colors of $\mathcal{O}$, and for another object $Y=(\langle m \rangle, (Y_1, \cdots, Y_m))$ of $\mathcal{O}^{\otimes}$ to define a simplicial set Map$_{\mathcal{O}^{\otimes}}(X,Y)$ to be:
\beq
\coprod_{f:\langle n \rangle \rarr \langle m \rangle} \prod_{1 \leq i \leq m}\text{Mul}_{\mathcal{O}}(\{X_j\}_{f(j)=i}, Y_i)
\nonumber
\eeq
Composition is obvious. Once $\mathcal{O}^{\otimes}$ is defined, we call the simplicial nerve N$^{\otimes}(\mathcal{O})$ of $\mathcal{O}^{\otimes}$ the \textbf{operadic nerve} of $\mathcal{O}$. Once we have a simplicial colored operad $\mathcal{O}$, it is further said to be a \textbf{fibrant simplicial colored operad} if the morphism sets Mul$_{\mathcal{O}}(\{X_i\}_{i \in I}, Y)$ are fibrant, that is Kan complexes.

\subsection{The twisted little k-disks operad $X\mathbb{E}_k$}
This subsection has for aim to define a slight generalization of the little $k$-disks operad as defined in \cite{Lu1}. The reader is referred to that reference for the original definition. Our definition is exactly the same as that of Lurie with the addition that integers $i$ are doubled via the shadow map. Thus we regard the elements $\delta i$ as ribbons. We can twist those ribbons, whence the notion of a framing, which gives the operad the name ``twisted". We have stayed away from the notion of ``framed little $k$-disks operad" as we find it too rigid for our purposes.\\

We generalize the rectilinear embeddings of \cite{Lu1} to twisted rectilinear embeddings by considering graded topological spaces Rect$(\square^k \times S, \square^k)$. Let $S$ be a finite set. An open embedding $\square^k \times S \hookrightarrow \square^k$ whose restriction to each connected component of $\square^k \times S$ is rectilinear in the sense of \cite{Lu1} and twists the elements of $S$ by some vector $\pi \overrightarrow{n}$ for some $\overrightarrow{n} \in \mathbb{Z}^{k-1}$ is said to be a \textbf{twisted rectilinear embedding}. One can easily see that our definition is a simple generalization of the rectilinear embedding as found in \cite{Lu1}:
\begin{align}
f:\square^k \times \delta i &\rarr \square^k \nonumber \\
((x_1, \cdots, x_k), \delta i) &\mapsto ((a_1x_1 + b_1, \cdots, a_kx_k+b_k), R_{\pi \overrightarrow{n}}\delta i)
\end{align}
We denote by Rect$^{\times}(\square^k \times S, \square^k)$ the collection of all twisted rectilinear embeddings from $\square^k \times S$ to $\square^k$ that we regard as a topological space by giving $\mathbb{Z}$ the discrete topology and by giving $\mathbb{R}^{2k} \times \mathbb{Z}^{k-1}$ the product topology. We identify Rect$^{\times}(\square^k \times S, \square^k)$ with an open subset of $(\mathbb{R}^{2k} \times \mathbb{Z}^{k-1})^S$. Having topologized this set of twisted embeddings we can now define a topological operad whose $n$-ary operations are parametrized by Rect$^{\times}(\square^k \times \{ \delta 1, \cdots, \delta n\})$.

\begin{tXEk}
We define a topological category $\:^tX\mathbb{E}_k$ as follows: objects are $\delta \langle n \rangle$'s where the $\langle n \rangle$'s are objects of $\Gamma$. For $\langle m \rangle$, $\langle n \rangle$ objects of $\Gamma$, a morphism from $\delta \langle m \rangle$ to $\delta \langle n \rangle$ in $\:^tX\mathbb{E}_k$ consists of a morphism $\alpha: \langle m \rangle \rarr \langle n \rangle$ in $\Gamma$, and for any $j \in \langle n \rangle^{\circ}$, a twisted rectilinear embedding $\square^k \times \delta
\alpha^{-1}\{j\} \rarr \square^k$. Further, for every $\delta \langle m \rangle$, $\delta \langle n \rangle $ in $\:^tX\mathbb{E}_k$, we have a topology on Hom$_{\:^tX\mathbb{E}_k}(\delta \langle m \rangle, \delta \langle n \rangle)$ induced from the presentation (\cite{Lu1}):
\beq
\coprod_{f:\langle m \rangle \rarr \langle n \rangle} \prod_{1 \leq j \leq n} \text{Rect}^{\times}(\square^k \times \delta f^{-1}\{j\}, \square^k)
\nonumber
\eeq
Composition of morphisms on strictly rectilinear embeddings is the same as the one on $\:^t\mathbb{E}_k$ (see \cite{Lu1}), and it is additive on rotations, componentwise.
\end{tXEk}
If we write:
\beq
X\mathbb{E}_k= N(\:^tX\mathbb{E}_k) \nonumber
\eeq
then $X \mathbb{E}_k$ becomes an $\infty$-category. Further:
\begin{XEk}
$X\mathbb{E}_k$ is an $\infty$-operad
\end{XEk}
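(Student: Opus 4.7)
The plan is to recognize $\:^tX\mathbb{E}_k$ as, up to passing from topological to simplicial enrichment via the singular complex functor, the intermediate category $\mathcal{O}^{\otimes}$ built from a fibrant simplicial colored operad $\mathcal{O}$, so that $X\mathbb{E}_k = N(\:^tX\mathbb{E}_k)$ coincides with the operadic nerve $N^{\otimes}(\mathcal{O})$. The conclusion then follows from the theorem of \cite{Lu1}, recalled in the preceding subsection, stating that the operadic nerve of a fibrant simplicial colored operad is an $\infty$-operad.

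First I would specify the underlying colored operad $\mathcal{O}$: it has a single color $\square^k$, and for each finite set $S$ the space of multimorphisms $\text{Mul}_{\mathcal{O}}(\{\square^k\}_{s \in S}, \square^k)$ is declared to be the singular complex of $\text{Rect}^{\times}(\square^k \times S, \square^k)$. Composition is inherited from the definition already given in the statement, namely ordinary composition of rectilinear embeddings on the affine part together with componentwise addition of twist vectors in $\mathbb{Z}^{k-1}$ on the framing part; the identity is the trivial, untwisted self-embedding of a single cube. Associativity and the unit laws then reduce to the corresponding facts for the ordinary little $k$-disks operad $\mathbb{E}_k$ of \cite{Lu1} combined with associativity and the unit law for addition in $\mathbb{Z}^{k-1}$. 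Comparing the explicit formula
\beq
\coprod_{f:\langle m\rangle \rarr \langle n\rangle} \prod_{1 \leq j \leq n} \text{Rect}^{\times}(\square^k \times \delta f^{-1}\{j\}, \square^k) \nonumber
\eeq
declared for the morphism spaces of $\:^tX\mathbb{E}_k$ with the general formula for $\mathcal{O}^{\otimes}$ recalled in the excerpt, one sees they agree term by term, so that $\:^tX\mathbb{E}_k = \mathcal{O}^{\otimes}$ after singularizing mapping spaces, and hence $N(\:^tX\mathbb{E}_k) = N^{\otimes}(\mathcal{O})$.

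Fibrancy is immediate: since $\text{Rect}^{\times}(\square^k \times S, \square^k)$ is by construction an open subset of $(\mathbb{R}^{2k} \times \mathbb{Z}^{k-1})^S$, it is an honest topological space, and its singular complex is automatically a Kan complex. Thus every multimorphism simplicial set in $\mathcal{O}$ is Kan, so $\mathcal{O}$ is a fibrant simplicial colored operad, and Lurie's theorem supplies all three conditions of the definition of an $\infty$-operad for the projection $X\mathbb{E}_k \rarr N(\Gamma)$.

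The principal obstacle I anticipate is a careful verification that composition of twisted rectilinear embeddings is well defined and operadically coherent: one must show that composing a twisted embedding with twist $\overrightarrow{n}_1$ and one with twist $\overrightarrow{n}_2$ yields a twisted rectilinear embedding with twist $\overrightarrow{n}_1 + \overrightarrow{n}_2$, and that this interacts correctly with the underlying rectilinear composition when one recursively plugs embeddings of different arities into each other. Once that bookkeeping is settled, the three defining conditions on $X\mathbb{E}_k \rarr N(\Gamma)$ are delivered wholesale by the operadic nerve theorem, with no further case analysis needed.
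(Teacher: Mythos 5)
Your proposal follows essentially the same route as the paper: both define the simplicial colored operad $\mathcal{O}$ with single color $\square^k$ and multimorphism spaces $\text{Sing}\,\text{Rect}^{\times}(\square^k \times I, \square^k)$, observe fibrancy via Kan-ness of singular complexes, identify $X\mathbb{E}_k$ with the operadic nerve, and invoke Proposition 2.1.1.27 of \cite{Lu1}. Your write-up is in fact more careful than the paper's (which omits the comparison of morphism spaces and the coherence check for composition of twists), but the underlying argument is identical.
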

\begin{proof}
The proof follows the same lines as those for proving that the little $k$-disks operad is an $\infty$-operad as done in \cite{Lu1}. In our case it suffices to adapt to the twisted case: if we let $\mathcal{O}$ be the simplicial colored operad with a single object $\square^k$ and Mul$_{\mathcal{O}}(\{\square^k\}_{i \in I}, \square^k)$ to be given by Sing Rect$^{\times}(\square^k \times I, \square^k)$, a singular complex of a topological space, then naturally $\mathcal{O}$ is a fibrant simplicial colored operad, hence $X \mathbb{E}_k$ is an $\infty$-operad by Proposition 2.1.1.27 of \cite{Lu1} which states that the operadic nerve of a fibrant simplicial colored operad is an $\infty$-operad.
\end{proof}

\subsection{Ribbon $\infty$-categories}
We are especially interested in $X \mathbb{E}_2$, as $X \mathbb{E}_2$-algebras in Cat$_{\infty}$ would correspond to a notion of braided monoidal $\infty$-categories with a framing, hence \textbf{ribbon $\infty$-categories}. We fix a map $\psi$ from $X \mathbb{E}_2$ to Cat$_{\infty}$ over N$(\Gamma)$ ( the collection of all such maps is Alg$_{X\mathbb{E}_2}(\text{Cat}_{\infty})$, an $\infty$-category (\cite{Lu1})). We denote by $\CRi$ the resulting ribbon $\infty$-category. For $A$ and $B$ two objects of $\CRi$, $f:A \rarr B$ a morphism in $\CRi$, we denote by Map$_{\CRi}(A,B)_f$ the smallest $\infty$-groupoid that contains all morphisms that have $f$ as either domain or codomain. We refer to Map$_{\CRi}(A,B)_f$ as a \textbf{framed $\infty$-braid} $f$. If we were to work with Alg$_{\mathbb{E}_2}(\text{Cat}_{\infty})$, $\psi$ an object of this $\infty$-category corresponding to a braided monoidal $\infty$-category $\mathcal{C}$, $f:A \rarr B$ a morphism in $\mathcal{C}$, we would simply call Map$_{\mathcal{C}}(A,B)_f$ an $\infty$-braid $f$. Now if $A \sim B$, one can identify those objects. This would mean closing the geometric braid $f$. Closing $f$ into $\overline{f}$ induces a quotienting map:
\beq
\text{Map}_{\CRi}(A,B)_f \rarr \text{Map}_{\CRi}(A,B)_{\overline{f}}
\eeq
that is defined inductively and yields a framed $\infty$-link $\text{Map}_{\CRi}(A,B)_{\overline{f}}$. There are objects $a$ and $b$ of $X \mathbb{E}_2$ such that $\psi(a)=A$ and $\psi(b)=B$. Further we can find a morphism $\tilde{f}$ in Map$_{X \mathbb{E}_2}(a,b)$ such that $\psi(\tilde{f})=f$. Closing means we collapse domain and codomain of a given morphism. Let $\pi$ be such a map on objects. Identifying $a$ and $b$ is done by:
\beq
\pi_{a,b}: a,b \rarr a \sim b
\eeq
which induces a map $\Pi_{a,b}$ on morphisms. We are led to defining $\Pi_{a,b}X \mathbb{E}_2$ to be $X \mathbb{E}_2$ where $a$ and $b$ have collapsed as well as all relevant higher morphisms. We define:
\beq
\text{Map}_{\CRi}(A,B)_{\overline{f}} = \Big(
\psi \text{Map}_{\Pi_{\psi^{-1}A, \psi^{-1}B} X \mathbb{E}_2}(\psi^{-1}A, \psi^{-1}B)
\Big)_{\overline{f}}
\eeq
where:
\beq
\overline{f}= \psi \Pi_{\psi^{-1}A, \psi^{-1}B} \psi^{-1}f
\eeq

We are considering the particular map $\psi$ that corresponds to the ribbon $\infty$-category $\CRi$ whose objects are thickened events $\delta e_i$, $\delta$ the shadow map, $e_i$ an event, some of whose morphisms are braids:
\beq
\delta e_1 \otimes \cdots \otimes \delta e_n \xrightarrow{f} \delta e_1 \otimes \cdots \otimes \delta e_n
\eeq
with closures $\overline{f}$ with a corresponding framed $\infty$-link:
\beq
\text{Map}_{\CRi}(\otimes^n \delta e_i, \otimes^n \delta e_i)_{\overline{f}}
\eeq
The same objects, braids and links are also those of $\CK$, with the link $\overline{f}$ giving rise to a handlebody $M_{\overline{f}}$. At the level of $\CRi$ this corresponds to introducing \textbf{$\infty$-4-manifolds} which we now define. We can define an $\infty$-4-manifold to be a pair:
\beq
(M_{\overline{f}}, \text{Map}_{\CRi}(\otimes ^n \delta e_i, \otimes ^n \delta e_i)_{\overline{f}})
\eeq
element of:
\beq
^{\infty}\CM^4 := \CM^4 \times_{\text{Mor}(\CK)} \text{Map}_{\CRi}
(\text{Ob}(\CRi),\text{Ob}(\CRi))_{\overline{\text{Mor}(\CRi)}}
\eeq
where the projection on the second factor yields $\overline{f}$, projecting down to $M_{\overline{f}}$ in Mor$(\CK)$. This gives rise to the enhanced category $\CRi^+$ whose objects are directed families of elements of $^{\infty}\CM^4$ with composition induced by concatenation of directed families with identity for the composition being $\Big(  \natural n S^2 \times D^2, \text{Map}_{\CRi}(\otimes^n \delta  e_i, \otimes^n \delta e_i) _{\overline{id}}\Big)$. In this case we have a projection from $\CRi^+$ down to $\CK$:
\begin{align}
\CRi^+ & \rarr \CK \nonumber \\
\rarr_{j \in J}(M_{\overline{f}_j},\text{Map}_{\CRi}(\otimes ^n \delta e_i, \otimes^n \delta e_i)_{\overline{f}_j}) & \mapsto \,\,\Big(\rarr_{j \in J}M_{\overline{f}_j}\Big)
\end{align}
obtained by restriction to those morphisms of $\CRi^+$ that only involve $\infty$-4-folds.

\subsection{Walled braids}
It is natural to close braids $f:A \rarr B$ to links if $A=B$. This is not difficult if $A=B=\otimes^n \delta e_i$. For braided monoidal $\infty$-categories on which we have a model structure, we can extend the closing of braids to those weak equivalences $f: \otimes_{i \in I,\; |I|=n} \delta e_i \rarr \otimes_{j \in J,\; |J|=n} \delta e_j$ where $I \neq J$. In particular if we can put a model structure on $\CRi$ then one could do just that. In general closing braids is not natural. Thus a morphism in a braided monoidal $\infty$-category:
\beq
f: A_1 \otimes \cdots \otimes A_n \rarr B_1 \otimes \cdots \otimes B_n
\eeq
may not necessarily close. Such a morphism we picture as a geometric braid stretching between two walls, one having objects $A_1, \cdots, A_n$ at a given height, separated by a same distance, the other wall having the objects $B_1, \cdots, B_n$ at the same height, separated by a same distance. We regard such a geometric braid along with its two accompanying walls as being embedded in $S^3$ and refer to those objects as \textbf{walled braids}. If those braids are actually ribbons we glue copies of $D^2 \times \widetilde{D^2}$ along those strands in accordance with the framing on those braids, where $\widetilde{D^2}$ is the blow up of $D^2$ at the origin. In the event that the domain and codomain of such a morphism are identified, we blow down $\widetilde{D^2}$ and identify the two walls which amounts to closing the ribbon into a framed link and the gluing just described amounts to gluing a 2-handle along framed links in $S^3$. Walled ribbons on which we have glued copies of $D^2 \times \widetilde{D^2}$ we call \textbf{walled handlebodies}. In particular objects of $\CRi$ that are not equivalent give rise to such walled 4-manifolds if a morphism between them exist in $\CRi$. Thus in this formalism $\AXECi$ yields a trove of ribbon $\infty$-categories which lead to walled generalizations of the notions introduced in previous sections: walled $\infty$-braids, walled $\infty$-4-manifolds, ...\\

Specifically for $\psi \in \AXECi$, leading to a ribbon $\infty$-category $\CRi$, $A_1 \otimes \cdots \otimes A_n$, $B_1 \otimes \cdots \otimes B_n$ two objects of $\CRi$, $f:\otimes ^n A_i \rarr \otimes^n B_i$ a morphism regarded as a geometric braid in between two walls, Map$_{\CRi}(\otimes ^n A_i, \otimes^n B_i)_f$ we refer to as a \textbf{walled $\infty$-braid}, $f$ is represented by a walled geometric ribbon on which we can glue copies of $D^2 \times \widetilde{D^2}$ to give rise to \textbf{walled 4-manifolds} $M_f$ with a corresponding notion of \textbf{walled $\infty$-4-manifolds} $(M_f, \text{Map}_{\CRi}(\otimes^n A_i, \otimes^n B_i)_f)$. Denote by $w\CM^4$ the collection of walled 4-manifolds. The collection of walled $\infty$-4-manifolds we define to be:
\beq
^{\infty}w\CM^4 := w\CM^4 \times_{\text{Mor}(\CK)} \text{Map}_{\CRi}
(\text{Ob}(\CRi),\text{Ob}(\CRi))_{\text{Mor}(\CRi)}
\eeq
giving rise to the enhanced category $w\CRi^+$ whose morphisms are no longer simple ribbons but are elements of $\overrightarrow{^{\infty}w\CM^4}$, the collection of directed families of elements of $^{\infty}w\CM^4$ with composition being induced by the concatenation of families and the identity for composition being denoted $\Big( (S^2 \times \widetilde{D^2})^{\natural}, \text{Map}_{\CRi}(\otimes^n A_i, \otimes^n B_i)_{id}\Big)$ where as usual $(S^2 \times \widetilde{D^2})^{\natural}$ is a variable identity that stands for $\natural n S^2 \times \widetilde{D^2}$ with $n$ depending on the context.\\

Observe that if $f:A_1 \otimes \cdots \otimes A_n \xrightarrow{\sim} B_1 \otimes \cdots \otimes B_n$ is an equivalence or a weak equivalence in the context of model categories, walls of the walled 4-manifold $M_f$ are identified, this is equivalent to blowing down the copies of $D^2 \times \widetilde{D^2}$ giving rise to $M_{\overline{f}}$. At the level of $\infty$-4-manifolds this corresponds to projecting down:
\beq
(M_f,\text{Map}_{\CRi}(\otimes ^n A_i, \otimes^n B_i)_f) \rarr (M_{\overline{f}},\text{Map}_{\CRi}(\otimes ^n A_i, \otimes^n B_i)_{\overline{f}})
\eeq
In particular if we are focusing our attention on the particular morphism $\psi$ corresponding to the ribbon $\infty$-category of events $\CRi$, it appears more natural to focus instead on $w\CRi^+$ rather than on $\CRi^+$, the objects being the same, with projection:
\beq
\setlength{\unitlength}{0.5cm}
\begin{picture}(19,7)(0,0)
\put(-4,6){$(M_f,\text{Map}_{\CRi}(\otimes ^n \delta e_i, \otimes^n \delta e_i)_f)$}
\put(11,6){$(M_{\overline{f}},\text{Map}_{\CRi}(\otimes ^n \delta e_i, \otimes^n \delta e_i)_{\overline{f}})$}
\put(13,0){$M_{\overline{f}} \in \CK$}
\put(7,6){\vector(1,0){3}}
\put(14,5){\vector(0,-1){3}}
\multiput(4,5)(0.4,-0.2){20}{\circle{0.1}}
\put(11.6,1.2){\vector(2,-1){0.3}}
\end{picture}
\eeq
that generalizes immediately to directed families.\\

Note that walled 4-manifolds are morphisms in $\CK^{\fC}$ as walls correspond to constraints. We have a projection:
\begin{align}
w\CRi^+ & \rarr \CK^{\fC} \nonumber \\
\rarr_{j \in J}(M_{f_j},\text{Map}_{\CRi}(\otimes ^n \delta e_i, \otimes^n \delta e_i)_{f_j}) & \mapsto \,\, \Big(\rarr_{j \in J}M_{f_j} \Big)
\end{align}

\section{Relations between $\Lambda$, $\Phi$, $\CK$, $\CK^{\fC}$, $\CK^{\CL,\fC}$, $\CRi$, $\CRi^+$ and $w\CRi^+$} \label{section5}
What we have so far is the following:
\beq
\setlength{\unitlength}{0.5cm}
\begin{picture}(10,11)(0,-1)
\put(0,-0.3){$\Phi$}
\put(3,0){\vector(-1,0){2}}
\put(2,0.5){$\partial$}
\put(0.3,3){\vector(0,-1){2}}
\put(0,3.7){$\Lambda$}
\put(4,-0.3){$\CK$}
\put(4.3,3){\vector(0,-1){2}}
\put(4,3.7){$\CK^{\fC}$}
\put(8,0){\vector(-1,0){3}}
\put(8.3,-0.3){$\CRi^+$}
\put(8.3,3.7){$w\CRi^+$}
\put(8.6,3){\vector(0,-1){2}}
\put(8,4){\vector(-1,0){2.8}}
\put(1,4){\vector(1,0){2}}
\put(4.3,7){\vector(0,-1){2}}
\put(4,7.7){$\CK^{\CL, \fC}$}
\put(1.2,5){\vector(1,1){2.5}}
\put(11,6.5){$\CRi$}
\put(11.2,6){\vector(-1,-4){1.5}}
\put(11,6){\vector(-1,-1){1.5}}
\put(4.6,5.2){\vector(4,1){6}}
\put(4,5){\line(-4,-1){2.5}}
\end{picture}
\eeq

In the above diagram, the functors $\CRi \rarr w\CRi^+$ and $\CRi \rarr \CRi^+$ produce walled $\infty$-4-manifolds and $\infty$-4-manifolds respectively from morphisms in $\CRi$, extended to directed families. The functors $\CK^{\fC} \rarr \CK$ and $w\CRi^+ \rarr \CRi^+$ are both induced by closing braids into links if possible. Functors $w\CRi^+ \rarr \CK^{\fC}$ and $\CRi^+ \rarr \CK$ are induced by projection on the first component. Functors $\Lambda \rarr \CK^{\CL, \fC}$, $\Lambda \rarr \CK^{\fC}$ and $\Lambda \rarr \CRi$ exist by assumption. They are essentially surjective functors.\\

Now a few remarks are in order. It is more reasonable to assume that $\Lambda$ can be recovered from $\CRi$ rather than assuming it can be recovered from the knowledge of Lagrangian densities of physical theories defined on 4-manifolds. Since $\CRi$ contains all morphisms between events, it would seem $\CRi$ is closer than $\CK^{\CL, \fC}$ in giving information about $\Lambda$. Noting the importance of $\CRi$ in determining $\Lambda$, one natural question is what are maps in $\AXECi$ other than $\psi$ giving rise to $\CRi$ corresponding to relative to $\Lambda$. This question is even more important if one can fully recover $\Lambda$ from the knowledge of $\CRi$. One would need to understand what are those maps in $\AXECi$ that are different from $\psi$, if any.\\

The above paragraph points at some interesting detour. It would be natural to choose $\CRi$ over $\CK^{\CL, \fC}$ to investigate natural phenomena. Note that giving $\CRi$ is equivalent to giving $w\CRi^+$, or equivalently walled $\infty$-4-manifolds, most of their information being encoded in $\infty$-braids which are none other than $\infty$-groupoids. Hence it would seem $\infty$-categories supersede Lagrangian field theories in investigating natural phenomena, something that one could see happening as far back as the work of Kontsevich on the Homological Mirror Symmetry, a mathematical treatise of the well-known Mirror Symmetry problem in Physics (\cite{K}). This importance of higher category theory over Lagrangian field theories in investigating deep questions regarding natural phenomena we regard as the most important point made by the present work.

\newpage

\section*{Appendix}
This appendix has for sole purpose to introduce an abstract notion of a variable in a domain to make a distinction between a variable in a domain and elements thereof.\\

We define \textbf{motivic frames}, an abstraction of some constructs in Mathematics which is motivic in spirit. A motivic frame is a collection of graded objects $\{x^{(n)}\}_{n \geq 0}$, with one object per degree, each object $x^{(i)}$ being built from $x^{(i-1)}$, with maps $x^{(i)} \xrightarrow{z^{(i)}} x^{(i+1)}$ indicating in what manner such a construction is done. For $X$ a mathematical construct we denote by MotFr[$X$] a corresponding motivic frame, and we denote $x^{(n)}[X]$ by MotFr$_n[X]$.\\

\begin{CWCx}
For a CW complex, $x^{(n)}$ is a $n$-skeleton, $z^{(n)}:x^{(n)} \mapsto x^{(n+1)}$ is the process of gluing a disk $D^{n+1}$ along an attaching map $j^{(n)}:S^n \rarr x^{(n)}$.
\end{CWCx}

\begin{Int}
integration has a motivic frame with two objects $x^{(0)}$ being an integrable function, $x^{(1)}$ the integral itself, $z^{(0)}:x^{(0)} \mapsto x^{(1)}$ is the integration.
\end{Int}

\begin{rgsp}
A space $X$ has a motivic frame with 3 objects, $x^{(0)}$ is a point, $x^{(1)}$ is a set of points, $x^{(2)}$ is a topological space, $z^{(0)}:x^{(0)} \mapsto x^{(1)}$ is the collection of points into a set, $z^{(1)}:x^{(1)} \mapsto x^{(2)}$ puts a topology on $x^{(1)}$.
\end{rgsp}

\begin{var}
A variable $x$ in some set $S$ represents an element of that set, hence $x=$ MotFr$_0[S]$.
\end{var}

Note that it is fairly clear from that formalism that for a given mathematical construct there may be different ways to implement such a construction, hence different motivic frames. When we write MotFr$[X]$ we will always specify what motivic frame we are using. Motivic frames for us have the advantage to make a distinction between points and variables representing such points. For instance writing $2 \in [0,3]$ or $x \in [0,3]$, which puts points and variables on a same footing, can be made more precise by saying $x=$ MotFr$_0( [0,3] )$ with $[0,3]=$ MotFr$_1( [0,3] )$ and $z^{(0)}: x \mapsto [0,3]$ amounts to collecting points into a closed interval $[0,3]$. $x$ has the nature of a point in $[0,3]$, whereas a given point in that interval is a specialization of $x$ to the location in $[0,3]$ corresponding to that point.

\end{document}